\newtheorem{theorem}{Theorem}[section]
\newtheorem*{theorem*}{Theorem}
\newtheorem{corollary}[theorem]{Corollary}
\newtheorem{proposition}[theorem]{Proposition}
\newtheorem{claim}{Claim}[theorem]
\newenvironment{customthm}[1]
  {\innercustomthm}
  {\endinnercustomthm}
\theoremstyle{definition}
\newtheorem{definition}[theorem]{Definition}
\newtheorem*{conjecture*}{Conjecture}
\newtheorem{example}[theorem]{Example}
\theoremstyle{remark}
\newtheorem{remark}[theorem]{Remark}
\newcommand{\R}{\mathbb{R}}
\begin{document}

\title[On the link of Lipschitz normally embedded sets]
{On the link of Lipschitz normally embedded sets}

\author[R. Mendes]{Rodrigo Mendes}
\address{Rodrigo Mendes: (1) Instituto de Ci\^encias Exatas e da Natureza, Universidade de Integra\c{c}\~ao Internacional da Lusofonia Afro-Brasileira (Unilab), Campus dos Palmares, 62785-000, Acarape/CE, Brazil
\newline
(2) Departament of Mathematics, Ben Gurion University} 

\email{mendespe@post.bgu.ac.il}

\author[J. Edson Sampaio]{Jos\'e Edson Sampaio}
\address{Jos\'e Edson ~Sampaio:  
Departamento de Matem\'atica, Universidade Federal do Cear\'a, Av.
Humberto Monte, s/n, Campus do Pici - Bloco 914, 60455-760
Fortaleza-CE, Brazil} 

 \email{edsonsampaio@mat.ufc.br}
\keywords{Lipschitz normally embedded sets, Subanalytic sets, Link, Conical properties}
\subjclass[2010]{14B05; 14P25 (Primary)  32S50 (Secondary)}
\thanks{The first named author was supported by a post-doctoral fellowship from the Department of Mathematics and the Kreitman School of Advanced Graduate Studies in Ben Gurion University of Negev at Israel}
\thanks{The second named author was partially supported by CNPq-Brazil grant 310438/2021-7, by the Serrapilheira Institute (grant number Serra -- R-2110-39576), by the ERCEA 615655 NMST Consolidator Grant and also by the Basque Government through the BERC 2018-2021 program and Gobierno Vasco Grant IT1094-16, by the Spanish Ministry of Science, Innovation and Universities: BCAM Severo Ochoa accreditation SEV-2017-0718. 
}
\begin{abstract}
A path-connected subanalytic subset in $\mathbb{R}^n$ is naturally equipped with two metrics: the inner and the outer metrics. We say that a subset is {\it Lipschitz normally embedded (LNE)} if these two metrics are equivalent. In this article, we give some criteria for a subanalytic set to be LNE. It is a fundamental question to know if the LNE property is conical, i.e., if it is possible to describe the LNE property of a germ of a subanalytic set in terms of the properties of its link. We answer this question by introducing a new notion called {\it link Lipschitz normally embedding}. We prove that this notion is equivalent to the LNE notion in the case of sets with connected links. 
\end{abstract}

\maketitle

\section{Introduction}

Given a path-connected subanalytic subset $X$ in the Euclidean space $\R^n$, there are two natural metrics on $X$: the outer metric $d(x_1,x_2)=\|x_1-x_2\|$ (i.e., the distance induced by the Euclidean metric on $\R^n$) and
the inner metric $d_X$ given by
$$
d_X(x_1,x_2)={\rm inf}\{length(\gamma); \, \gamma \mbox{ is a rectifiable path in } X \mbox{ connecting } x \mbox{ and } y\}.
$$
We say that a set $X$ is Lipschitz normally embedded (LNE) if there exists $C\geq 1$ such that $d_X(x,y)\leq C\|x-y\|$ for all $x,y\in X$. In this case, we say also that $X$ is $C$-LNE. 

This definition was introduced by L. Birbrair and T. Mostowski \cite{BirbrairM:2000}, where they just call it normally embedded. As already remarked in \cite{NeumannPP:2019}, Lipschitz Normal Embedding is a very active research area with many recent results giving necessary conditions for a set to be LNE in the real and complex setting, e.g., by L. Birbrair, M. Denkowski, A. Fernandes, D. Kerner, F. Misev, W. D. Neumann, J. J. Nu\~no-Ballesteros, H. M. Perdersen, A. Pichon, M. A. S. Ruas, M. Tibar etc (\cite{BirbrairMN-B:2018},  \cite{DenkowskiT:2019}, \cite{FernandesS:2019}, \cite{KernerPR:2018}, \cite{MisevP:2019}, \cite{NeumannPP:2019} and \cite{NeumannPP:2019b}). 
Recent works show that the LNE property appears in several fields of Mathematics, e.g., 
\begin{itemize}
 \item  Algebraic Geometry: 
 \begin{enumerate}
 \item[-] Minimality: It was proved in \cite{NeumannPP:2019b} that among the rational complex surfaces, the LNE surfaces are exactly the minimal surfaces;
  \item[-] Reduced structures: It was proved in \cite{FernandesS:2019} (see also \cite{DenkowskiT:2019}) that LNE analytic sets have reduced tangent cones;
  \item[-] Space of matrices and determinantal varieties: It was proved in \cite{KernerPR:2018} that some algebraic subsets of the space of matrices, which include the space of rectangular/(skew-)symmetric/hermitian matrices of rank equal to a given number and their closures, and the upper triangular matrices with determinant 0 are LNE. Some generalizations for determinantal varieties were also presented;
 \end{enumerate}
  
 \item Differential Geometry: 
 \begin{enumerate}
 \item[-] Bernstein type theorem: It was proved in \cite{FernandesS:2020} that an LNE complex algebraic set which has linear subspace as its tangent cone at infinity must be an affine linear subspace (see also \cite{Sampaio:2022}); 
 \item[-] Tangent cones: It was proved in \cite{FernandesS:2019} that subanalytic LNE sets have LNE tangent cones;
 \end{enumerate}
 \item Topology:
 \begin{enumerate}
 \item[-] Knot Theory: It was proved in \cite{BirbrairMN-B:2018} that, for a large class of locally LNE analytic parametrized surfaces, the knots presented as the links of such surfaces are always trivial (unknotted);
 \item[-] Fundamental group: It was proved in \cite{FernandesS:2021} that if two compact subanalytic sets have the same LNE constant and are close enough with respect to the Hausdorff distance, then their fundamental groups are isomorphic.
 \end{enumerate} 
\end{itemize}

Let us mention that one of the most important tools to study the topology of singularities is the Local Conical Structure Theorem, which says that {\it given a subanalytic set $X\subset \R^n$ and $p\in \overline{X}$, there exists $\epsilon_0$ such that $X\cap \overline{B(p;\epsilon)}$ is homeomorphic to the cone over $X\cap \mathbb{S}^{n-1}(p;\epsilon)$ for all $0<\epsilon\leq \epsilon_0$, where $B(p;\epsilon)$ (resp. $\mathbb{S}^{n-1}(p;\epsilon)$) denotes the open ball (resp. sphere) centered at 0 with radius $\epsilon$.}
However, there is no metric Local Conical Structure Theorem, even in the case of the inner metric, as was proved by L. Birbrair and A. Fernandes in the remarkable paper \cite{BirbrairF:2008}. It is very useful to describe some property of a germ of a subanalytic set in terms of properties of its link and, when we can do that, we say that such a property is a conical property. In general, it is important to Lipschitz Geometry of Singularities to find metric properties which are conical. In particular, a fundamental question is the following:

\vspace{0.5cm}
\noindent {\bf Question 1.} Is LNE property a conical property?

\vspace{0.5cm}

In this article, we answer positively the above question. A set $X \subset \mathbb{R}^n$ is called LNE at $0$ if there exists an open neighbourhood $U\subset \mathbb{R}^n$ of $0$ such that $X\cap U$ is LNE and we say that $X$ is LLNE at $0$ if there exist $C\geq 1$ and $\delta >0$ such that $X\cap \mathbb{S}^{n-1}(0;t)$ is $C$-LNE for all $0<t\leq \delta$. Thus, we prove the following:

\begin{customthm}{\ref*{main_thm_connected}}
Let $X \subset \mathbb{R}^n$ be a closed subanalytic set, $0 \in X$. Assume that $(X\setminus \{0\},0)$ is a connected germ. Then, $X$ is LNE at $0$ if and only if $X$ is LLNE at $0$.
\end{customthm}

We also present a result in the case that $(X\setminus \{0\},0)$ is not necessarily a connected germ (see Corollary \ref{main_thm_non-connected}).

This article is organized as follows. In Section \ref{sec:preliminaries} the main definitions used in this article are presented and two important results are proved: the LLNE property does not depend on the subanalytic norm chosen on $\mathbb{R}^n$ (see Proposition \ref{LLNE_metric_inv}) and L-regular cells are LLNE (see Proposition \ref{L-regular_llne}). In Section \ref{sec:main_results} the main result of this article is proved (see Theorem \ref{main_thm_connected}) and some examples are presented in order to show that the subanalytic hypothesis in this theorem cannot be removed (see Examples \ref{ex:non-LNE} and \ref{ex:non-LLNE}). 


Most parts of the given definitions are general and, when necessary for our purposes, rely on the subanalytic structure. In order to know more about the subanalytic geometry, see, for instance, \cite{Lojasiewicz:1964}, \cite{Gabrielov:1968} and \cite{BierstoneM:2000}.


\bigskip

\noindent{\bf Acknowledgements}. Rodrigo Mendes thanks Ben Gurion University of Negev for its support and stimulating atmosphere during this research work and also the Basque Center of Applied Mathematics (BCAM) for its hospitality and support during the final part of the preparation of this article. We would like to thank L. Birbrair and A. Fernandes for their interest in this research.

\section{Preliminaries}\label{sec:preliminaries}

\subsection{LNE and LLNE properties}
Let $Z\subset\R^n$ be a path-connected subset. Given two points $q,\tilde{q}\in Z$, we define the \emph{inner distance} on $Z$ between $q$ and $\tilde{q}$ by the number $d_Z(q,\tilde{q})$ below:
$$d_Z(q,\tilde{q}):=\inf\{ \mbox{Length}(\gamma) \ | \ \gamma \ \mbox{is an arc on} \ Z \ \mbox{connecting} \ q \ \mbox{to} \ \tilde{q}\}.$$
\begin{definition}
We say that $Z$ is {\bf Lipschitz normally embedded (LNE)}, if there is a constant $C\geq 1$ such that $d_Z(q,\tilde{q})\leq C\|q-\tilde{q}\|$, for all $q,\tilde{q}\in Z$. We say that $Z$ is {\bf Lipschitz normally embedded set at $p$} (shortly LNE at $p$), if there is a neighbourhood $U$ such that $p\in U$ and $Z\cap U$ is an LNE set or, equivalently, that the germ $(Z,p)$ is LNE. In this case, we say also that $X$ is $C$-LNE (resp. $C$-LNE at $p$). 
\end{definition}
\begin{definition}
Let $X\subset\R^n$ be a subset, $p\in \overline X$ and $X_t:=X\cap \mathbb{S}^{n-1}(p;t)$ for all $t>0$. We say that $X$ is {\bf link Lipschitz normally embedded (at $p$)} (shortly LLNE (at $p$)), if there is a constant $C\geq 1$ such that $d_{X_t}\leq C\|\cdot \|$, for all small enough $t>0$. In this case, we say also that $X$ is $C$-LLNE at $p$. 
\end{definition}
More generally, for $\Lambda\subset \R$, a family $\{X_t\}_{t\in \Lambda}$ of subsets of $\R^n$ is called {\bf LNE with uniform constant} if there exists $C\geq 1$ such that $X_t$ is $C$-LNE, for all $t\in \Lambda$. In this case, the family $\{X_t\}_{t\in \Lambda}$ is called also $C$-LNE. We say that $\{X_t\}_{t\in \Lambda}$ is {\bf LNE with uniform constant at $t=0$} if there exist positive constants $\varepsilon$ and $C$ such that $X_t$ is $C$-LNE for all $t\in \Lambda \cap ([-\varepsilon,0)\cup (0,\varepsilon]) $. In this case, the family $\{X_t\}_{t\in \Lambda}$ is called $C$-LNE at $t=0$.

Let $\|\cdot\|_1$ be a subanalytic norm on $\R^n$. Similarly, we say that {\bf $X \subset \mathbb{R}^n$ is LLNE at $p$ w.r.t. $\|\cdot \|_1$} if there is a constant $C>0$ such that $d_{X_t^{\|\cdot \|_1}}\leq C\|\cdot\|$, for all small enough $t>0$, where $X_t^{\|\cdot \|_1}=\{x \in X; \|x -p\|_1=t\}$. 
In fact, the LLNE property does not depend on the subanalytic norm chosen on $\mathbb{R}^n$ as it is shown in the next result.

\begin{proposition}\label{LLNE_metric_inv}
Let $X$ be a subanalytic set, $p \in \overline X$.  Let $\|\cdot \|_1$ and $\|\cdot \|_2$ be subanalytic norms on $\mathbb{R}^n$. Then $X$ is LLNE at $p$ w.r.t. $\|\cdot \|_1$ if and only if $X$ is LLNE at $p$ w.r.t. $\|\cdot \|_2$.
\end{proposition}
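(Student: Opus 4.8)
The plan is to reduce the statement to a comparison between the level sets (spheres) of the two norms and to transfer the LNE property from one sphere to the other via a bi-Lipschitz map that is, moreover, uniformly bi-Lipschitz as the radius tends to zero. First I would observe that any two subanalytic norms on $\mathbb{R}^n$ are bi-Lipschitz equivalent: by subanalyticity and homogeneity (each norm is positively homogeneous of degree $1$ and continuous), the function $x \mapsto \|x\|_1/\|x\|_2$ is a continuous subanalytic function on the compact set $\mathbb{S}^{n-1}$, hence bounded above and below by positive constants $a, b$ with $a\|x\|_2 \le \|x\|_1 \le b\|x\|_2$ for all $x$. This immediately gives that the Euclidean outer metric, the $\|\cdot\|_1$-outer metric and the $\|\cdot\|_2$-outer metric on $X$ are all pairwise equivalent, and likewise the associated inner metrics are pairwise equivalent with the same constants. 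So it suffices to work with a single ambient outer metric and only the shapes of the spheres matter.

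Next I would define the natural radial-type map between the two families of spheres. Fix $t>0$ small. For a point $q \in X^{\|\cdot\|_1}_t$, i.e. $\|q - p\|_1 = t$, let $\Phi_t(q) = p + \lambda(q)\,(q-p)$ where $\lambda(q) > 0$ is chosen so that $\|\Phi_t(q) - p\|_2$ equals a fixed multiple of $t$ (this $\lambda$ is well-defined and subanalytic by monotonicity of $s \mapsto \|s(q-p)\|_2$). The key point is that, because of the homogeneity of both norms, $\lambda(q)$ lies in a fixed compact interval $[a,b]$ independent of $t$ and of $q$, and the map $\Phi_t$ restricted to $X^{\|\cdot\|_1}_t$ is bi-Lipschitz onto its image with bi-Lipschitz constant bounded independently of $t$; the same bound controls how $\Phi_t$ distorts the inner metric on $X$. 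A subtlety here is that $\Phi_t(q)$ need not land exactly on $X^{\|\cdot\|_2}_{t'}$ for a single $t'$, only in an annular region $\{x \in X : c_1 t \le \|x-p\|_2 \le c_2 t\}$; to handle this I would either (a) compose with the Euclidean radial retraction onto the exact sphere $\mathbb{S}^{n-1}_{\|\cdot\|_2}(p;t')$, checking that this retraction is uniformly bi-Lipschitz on the relevant annulus and compatible with the subanalytic structure of $X$ — this uses the local conical structure of $X$ and a Łojasiewicz-type inequality to ensure the retraction stays inside $X$ up to uniformly bounded inner-distance error — or (b) bypass exact spheres altogether by reformulating LLNE in terms of the annular ``shells'' $X \cap \{c_1 t \le \|x-p\| \le c_2 t\}$ and proving that uniform LNE of the exact spheres is equivalent to uniform LNE of these shells, which is a routine consequence of the conical structure.

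I expect step (b)/the annulus issue to be the main obstacle: passing between the exact level set of one norm and the exact level set of the other requires controlling the inner distance along short radial segments within $X$, and radial segments need not lie in $X$. The cleanest route is to establish once and for all the auxiliary fact that, for a subanalytic $X$ with $p \in \overline X$, the family of spheres $\{X \cap \mathbb{S}^{n-1}(p;t)\}$ is LNE with uniform constant if and only if the family of shells $\{X \cap \{t \le \|x-p\| \le 2t\}\}$ is, and that this shell formulation is manifestly invariant under bi-Lipschitz changes of the ambient norm. Granting that lemma, the proposition follows: $\|\cdot\|_1$-shells and $\|\cdot\|_2$-shells are the same subsets up to a bounded change of the scaling parameter $t$, so uniform LNE of one family is equivalent to uniform LNE of the other, hence $X$ is LLNE w.r.t. $\|\cdot\|_1$ iff it is LLNE w.r.t. $\|\cdot\|_2$.
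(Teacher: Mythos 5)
Your opening observation (any two subanalytic norms are equivalent, so only the shapes of the spheres matter) is fine, but the core of your argument has a genuine gap. In option (a), the rescaled point $\Phi_t(q)=p+\lambda(q)(q-p)$ in general does not belong to $X$ at all: radial rescaling preserves $X$ only when $X$ is a cone with vertex $p$. So there is no ``Euclidean radial retraction onto the exact sphere'' inside $X$ to compose with, and no \L ojasiewicz-type inequality can force the radial projection of a point of $X$ to land back in $X$. Moreover, the Local Conical Structure Theorem you invoke is purely topological; as stressed in the introduction of this paper (citing Birbrair--Fernandes), there is no metric conical structure theorem, so it gives no control on inner distances along your retraction. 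In option (b), the auxiliary lemma you declare to be routine --- that uniform LNE of the spheres $X\cap \mathbb{S}^{n-1}(p;t)$ is equivalent to uniform LNE of the shells $X\cap\{t\le \|x-p\|\le 2t\}$ --- is exactly the hard point: the implication from shells to spheres requires pushing a path that wanders through the shell back onto the sphere while staying inside $X$ with controlled length, which is the same difficulty as in (a). For subanalytic germs a statement of this kind is of comparable depth to the main theorem of the paper (whose proof, in turn, uses the present proposition), so assuming it here is either circular or an unjustified leap; it is certainly not a consequence of the topological conical structure.

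The paper's proof gets around all of this with a different device. It considers the single map $H(x)=\frac{\|x\|_1}{\|x\|_2}\,x$ (and $H(0)=0$), which is subanalytic, bi-Lipschitz, and sends each $\|\cdot\|_1$-sphere exactly onto the $\|\cdot\|_2$-sphere of the same radius, so $H(X)$ is LLNE at $0$ w.r.t.\ $\|\cdot\|_2$ whenever $X$ is LLNE at $0$ w.r.t.\ $\|\cdot\|_1$; it then invokes \cite[Corollary 0.2]{Valette:2007}, which provides a bi-Lipschitz homeomorphism $F\colon H(X)\to X$ \emph{preserving the distance to the origin}, and this is the ingredient that legitimately transports uniform LNE of links from $H(X)$ to $X$. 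Your proposal contains no substitute for this step; to repair it you would need either to quote Valette's theorem (or an equivalent link-comparison result for bi-Lipschitz equivalent subanalytic germs) or to actually prove a sphere-versus-shell comparison lemma, which is not routine.
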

\begin{proof}
We assume without loss of generality that $p=0$. Let us consider the map $H\colon\mathbb{R}^n\rightarrow \mathbb{R}^n$ given by 
$$H(x)=\left\{\begin{array}{ll}
            \frac{\|x \|_1}{\|x \|_2}x,& \ x\neq 0\\
            0,&\ x=0.\\
            \end{array}
            \right. 
$$
Since $\|\cdot\|_1$ and $\|\cdot\|_2$ are subanalytic norms, the map $H$ is clearly a subanalytic bi-Lipschitz map. Then, if $X$ is LLNE at $0$  w.r.t. $\|\cdot \|_1$ then $H(X)$ is LLNE at $0$ w.r.t. $\|\cdot \|_2$. It is because $H(\mathbb{S}_{1}(0,r))=\mathbb{S}_{2}(0,r)$, where $\mathbb{S}_{j}(0,r)$ denotes the sphere centred at $0$ with radius $r$ with respect to the norm $\|\cdot \|_j, \ j=1,2$, for all $r>0$. Now, let us consider the map $\tilde H\colon (H(X), \|\cdot \|_2)\rightarrow (X,\|\cdot \|_2)$ given by $\tilde H(y)=Id\circ H^{-1}(y)$, where $Id\colon (X,\|\cdot \|_1)\rightarrow (X,\|\cdot \|_2)$ is the identity map, which is also a bi-Lipschitz homeomorphism. By \cite[Corollary 0.2]{Valette:2007}, there is a bi-Lipschitz homeomorphism $F\colon (H(X), \|\cdot \|_2)\rightarrow (X,\|\cdot \|_2)$ preserving the distance to $0$. Hence, the set $X$ is LLNE at $0$ w.r.t. $\|\cdot \|_2$. The converse of the proof follows by making the same with $H^{-1}$ instead of $H$. 
\end{proof}

\begin{definition}\label{scaling norm}
Given $v =(v_1,...,v_n)\in \mathbb{R}^n_+=\{(x_1,...,x_n);\, x_i>0,\,\, \forall i\}$, we define the subanalytic norm $\|x\|_{max,v}=max\{v_1|x_1|,\ldots,v_n|x_n|\}$. In fact, $\|x\|_{max,v}$ is a semialgebraic norm. In this case, the sphere $\mathbb{S}_{max,v}^{n-1}(p;t)=\{x \in \R^n; \|x -p\|_{max,v}=t\}$ is the union of scaling sections of the form $\{x;|x_i|\leq |x_j|=\frac{t}{v_j}, \forall i\}$ according with the given $v$. 
\end{definition}

\begin{remark}
From Proposition \ref{LLNE_metric_inv}, a subset $X\subset \mathbb{R}^n$ is LLNE at $p$ if the family $\{X\cap \mathbb{S}_{max,v}^{n-1}(p;t)\}_t$ is $C$-LNE at $t=0$, for some $C>0$ and some $v \in \R^n_+$.
\end{remark}

\medskip
\subsection{Inner and outer contacts}
Given two non-negative functions $f$ and $g$, we write $f\lesssim g$ (resp. $f\gtrsim g$) if there exists some
positive constant $C$ such that $f\leq Cg$ (resp. $g\leq Cf$). We also denote $f \simeq g$ if $f\lesssim g$ and $g\lesssim f$. If $f$
and $g$ are germ of functions on $(X, x_0 )$, we write $f\ll  g$ if $\lim\limits_{x\to x_0} [f (x)/g(x)] = 0.$

\begin{definition}[\cite{BirbrairM:2017}]\label{outerinnercontact}
An arc $\gamma$ with the initial point at $x_o$ is a continuous subanalytic map $\gamma\colon [0,\epsilon_o)\rightarrow \R^n$ such that $\gamma(0)=x_o$. When it does not lead to confusion, we use the same notation for an arc and its image in $\R^n$. Unless otherwise specified, we suppose that arcs are parametrized by the
distance to $x_o$, i.e., $\|\gamma(\epsilon)-x_o\|=\epsilon, \ \epsilon \in (0,\epsilon_o)$. Here we denote the image of $\gamma$ by $Im(\gamma)$. 

Let $\gamma_1$ and $\gamma_2$ be two arcs with initial point $x_o$. We can define the {\bf outer contact} of two arcs $\gamma_1, \gamma_2\colon [0,\epsilon_o)\to \R^n$, denoted by $tord(\gamma_1,\gamma_2)$, by the order at $\epsilon=0$ of the function $\|\gamma_1(\epsilon)-\gamma_2(\epsilon)\|$. Given $X\subset \R^n$ be a closed subanalytic set, $x_o \in X$, it is known that there exists a subanalytic distance $\tilde{d}_X$ on $X$ equivalent to its inner distance $d_X$ (see \cite{Kurdyka:1997} and \cite{BirbrairM:2000}, for instance). Then, we can define the {\bf inner contact} of $\gamma_1$ and $\gamma_2$ by the order at $\epsilon=0$ of the function $\tilde{d}_X(\gamma_1(\epsilon),\gamma_2(\epsilon))$ and it is denoted by $tord_X(\gamma_1, \gamma_2)$.    
\end{definition}

\begin{definition}\label{dist.connect.comp}
Let $X\subset \R^n$ be a subset. Let us denote the collection of the path-connected components of $X$ by $\pi_0(X)$. If $X$ is a path-connected set, we define $d_0(X)=1$ and if $X$ is not a path-connected set, we define
$$
d_0(X)=\inf \{d(Y,Z);\, Y,Z\in \pi_0(X) \mbox{ and } Y\not=Z\},
$$
where $d(Y,Z)=\inf \{\|y-z\|;\,y\in Y\mbox{ and } z\in Z\}$.
\end{definition}

\subsection{L-regular cells}
\begin{definition}[L-regular cells]
A subanalytic set $Y\subset \mathbb{R}^n$ is called an L-regular cell (or an L-regular s-cell) with constant $C$ when it is a point or has one of the following forms, for some coordinates of $\mathbb{R}^n$:
\begin{itemize}
 \item [(i)] $Y=\{(x',y) \in \mathbb{R}^{n-1} \times \mathbb{R}; y=f_0(x'); x' \in \tilde Y\}$ or
 \item [(ii)] $Y=\{(x',y) \in \mathbb{R}^{n-1} \times \mathbb{R}; f_1(x') < y < f_2(x'); x' \in \tilde Y\}$,
\end{itemize} 
where
\begin{enumerate}
\item Each $\tilde Y$ is an analytic submanifold (homeomorphic to an open ball) and has one of the previous forms;
\item Each $f_i$ is analytic on $\tilde Y$, subanalytic on the closure of $\tilde Y$ with $f_1(x')<f_2(x')$, for all $x' \in \tilde Y$;
\item There is a constant $C>0$ such that $\|\nabla f_i(x')\| \leq C,$ for all $x' \in \tilde Y$, $i=0,1,2$.
\end{enumerate}
If $Y$ has the form $(i)$ (resp. $(ii)$), we say that $Y$ has a graph type (resp. a band type).
\end{definition}

\begin{remark}
It is known that an L-regular cell is LNE (see \cite{Kurdyka:1997}, p. 180). 
\end{remark}


We are going to show that an L-regular cell is also LLNE.

\begin{proposition}\label{L-regular_llne}
An L-regular cell $Y \subset \mathbb{R}^n$ is LLNE at $p \in \overline{Y}$ whenever its link at $p$, $link_p(Y)$, is connected. In particular, any ($k\geq 2$)-dimensional L-regular cell $Y \subset \mathbb{R}^n$ is LLNE at $p \in \overline{Y}$.
\end{proposition}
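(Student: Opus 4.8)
The plan is to reduce the general L-regular cell to the case of a graph over a lower-dimensional L-regular cell, and then run an induction on dimension combined with the already-established facts about L-regular cells. First I would recall that an L-regular cell $Y$ is inner Lipschitz equivalent to a ``standard'' cell by a subanalytic bi-Lipschitz homeomorphism preserving the last coordinate direction (this is built into Kurdyka's construction); since the LLNE property is preserved by subanalytic bi-Lipschitz homeomorphisms (the Proposition following \cite{Valette:2007} in Subsection~\ref{sec:examples}) and does not depend on the subanalytic norm chosen (Proposition~\ref{LLNE_metric_inv}), it suffices to prove the statement for $Y$ in one of the normal forms (i) or (ii). The base case of the induction, $\dim Y \le 1$, is either a point (trivial) or an arc, where one checks directly that sufficiently small spheres meet $Y$ in (at most) a single point or a trivially LNE piece; more to the point, the ``in particular'' clause only asserts $k \ge 2$, so the base of the induction relevant for applications is $\dim\tilde Y \ge 1$.

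The inductive step is where I would spend the effort. Write $Y = \{(x',y); f_1(x') < y < f_2(x'),\ x' \in \tilde Y\}$ (the graph-type case (i) being easier, and handled similarly by viewing $Y$ as a Lipschitz graph over $\tilde Y$ and invoking the Corollary about graphs of subanalytic Lipschitz maps). By the inductive hypothesis, $\tilde Y$ is LLNE at $p' := \pi(p)$, where $\pi$ is the projection forgetting the last coordinate; equivalently, by the remark after Proposition~\ref{LLNE_metric_inv}, the family of scaling sections $\tilde Y \cap \mathbb{S}^{n-2}_{max,v}(p';t)$ is $C$-LNE with uniform constant as $t \to 0$. The key observation is that, because each $f_i$ has bounded gradient (constant $\le C$), the behaviour of $f_i$ is controlled by the behaviour of $x'$: on each scaling section of $\tilde Y$, the oscillation of $f_i$ is comparable to the diameter of the section. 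I would therefore choose a scaling norm on $\mathbb{R}^n = \mathbb{R}^{n-1}\times\mathbb{R}$ of the form $\|(x',y)\|_{max,v}$ where the weight on the $y$-coordinate is taken large enough (relative to $C$ and the weights defining the norm on $\mathbb{R}^{n-1}$) that the spheres $\mathbb{S}^{n-1}_{max,v}(p;t)$ meet $Y$ in sets whose projection to $\mathbb{R}^{n-1}$ lands inside a single scaling section of $\tilde Y$; then $Y \cap \mathbb{S}^{n-1}_{max,v}(p;t)$ is, up to bounded distortion, the band $\{(x',y); f_1(x') < y < f_2(x'),\ x' \in \tilde Y_t\}$ over an LNE piece $\tilde Y_t$, with the two bounding functions Lipschitz with uniform constant. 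Such a ``band over an LNE base with uniformly Lipschitz walls'' is LNE with a constant depending only on $C$ and the constant from the inductive hypothesis: given two points one moves first within the $\tilde Y_t$-direction along a near-geodesic of $\tilde Y_t$ (staying inside the band, which is possible after a controlled vertical adjustment since the walls vary Lipschitz-ly), then adjusts the vertical coordinate. Tracking the constants shows they are uniform in $t$, giving LLNE of $Y$ at $p$.

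The main obstacle I anticipate is the geometric bookkeeping in choosing the scaling norm so that a single sphere section of $Y$ sits over a single sphere section of $\tilde Y$ — one must check that the ``vertical extent'' $f_2(x') - f_1(x')$ of the band over a section at scale $t$ is itself $O(t)$ (which again follows from the bounded-gradient hypothesis applied along a path in $\tilde Y_t$ from a point where the section is ``anchored''), and that no degenerate collapsing of the band to lower dimension spoils path-connectivity of the section; the hypothesis that $link_p(Y)$ is connected is exactly what rules out the bad configurations and lets the induction go through, and the ``in particular'' for $k \ge 2$ follows since a band-type or graph-type cell of dimension $\ge 2$ automatically has connected link. A secondary technical point is justifying that the inner distance inside $Y\cap\mathbb{S}^{n-1}_{max,v}(p;t)$ is comparable to the inner distance inside the idealized band, which is a routine consequence of the bi-Lipschitz straightening of L-regular cells; I would state this comparison as a lemma and defer its (standard) verification.
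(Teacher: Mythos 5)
Your proposal follows essentially the same route as the paper's own proof: induction on dimension, disposing of graph-type cells via the Lipschitz-graph observation, and, for band-type cells, using Proposition~\ref{LLNE_metric_inv} to pass to a weighted max norm for which the sphere sections of $Y$ become exactly the bands over the sphere sections of $\tilde Y$, then connecting two points by lifting a near-geodesic of $\tilde Y_t$ and correcting it along the walls using the bounded gradients of $f_1,f_2$. The only slip is the direction of the weight: with the paper's convention $\|x\|_{max,v}=\max_i v_i|x_i|$, the weight on the last coordinate must be taken \emph{small} (so that the top and bottom faces of the box sit inside a cone around the vertical axis avoided by $Y$), not large; with that adjustment your argument is the one in the paper.
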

In order to avoid excessive repetitions in the next arguments, whenever necessary, we assume that the parameter $t>0$ is taken small enough.
\begin{proof}
We may assume that $p=0$. 
Given $v =(v_1,...,v_n)\in \mathbb{R}^n_+$, we consider $Y_t^{v}{=}Y\cap \mathbb{S}_{max,v}^{n-1}(0,t){=}\bigcup_{j=1}^nY\cap \{x;\|x\|_{max,v}=|v_jx_j|=t\}$, where $\mathbb{S}_{max,v}^{n-1}(0,t)=\{x\in \R^n;\|x\|_{max,v}=t\}$. Then, by Proposition \ref{LLNE_metric_inv}, $Y$ is LLNE at $0$ if and only if the family $\{Y_t^{v}\}_t$ is $C$-LNE at $t=0$, for some constant $C\geq 1$. If $\dim Y=0$ then $Y$ is obviously LLNE at $0$. Thus, we consider $dim(Y)>0$ and assume that any L-regular cell with dimension $<\dim Y$ is LLNE at $0$. Using the immediate fact that if $Z$ is an L-regular cell which is LLNE at $0$ and $f\colon Z\to \R$ is an analytic function with bounded derivative then $Graph(f)$ is LLNE at $0$ as well, we can assume that $Y$ has a band-like type on the L-regular set $\tilde Y$ and $\dim \tilde{Y}=\dim Y-1$. Using the induction hypothesis, we can assume that $\tilde Y$ is LLNE at $0$ and by Proposition \ref{LLNE_metric_inv}, for a given $v \in \R^n_+$, we obtain that the family $\tilde Y_t^v=\tilde Y\cap \mathbb{S}_{max,v}^{n-2}(0,t)=\cup_{j=1}^{n-1} \tilde Y \cap \{x;\|x\|_{max,v}=|v_jx_j|=t\}$ is LNE with uniform constant at $t=0$. 

\medskip

Moreover, since the function $\|\nabla f_i\|\colon \tilde Y\rightarrow \mathbb{R}$ is bounded, for $i=1,2$, we obtain, on these coordinates, that $Y$ is not tangent to the line $L_y=\{(0,\ldots,0,x_n);\, x_n\in \R\}$. So, we can obtain a conical neighbourhood $\mathcal{C}_\eta=\{(\tilde x,x_n); \|\tilde x\|_{max}\leq \eta |x_n|\}$ of $L_y$ such that  $\mathcal{C}_\eta\cap Y=\{0\}$. Thus, we set $v=(1,...,1,\frac{\eta}{2})$, which implies $Y_{t,n}^v=Y\cap \{x;\|x\|_{max,v}=|v_nx_n|=t\}=\emptyset$.

\begin{claim}\label{claim_family_llne}
 The family $\{Y_t^v\}_t$ is LNE with uniform constant at $t=0$. 
\end{claim}
\begin{proof}[Proof of Claim \ref{claim_family_llne}]
We denote $Graph(f_1|_{\tilde Y_t^v}) \cup Graph(f_2|_{\tilde Y_t^v})$ by $\bar\partial Y_t^v$. Let $x_t=(\tilde{x}_t,x_{nt}), \ y_t=(\tilde{y}_t,y_{nt})\in Y_t^v$ and take the segment connecting them $\gamma_t(s)=(1-s)x_t+sy_t$. If $Im(\gamma_t)\subset Y_t^v$, the inner and outer distance of $x_t$ and $y_t$ on $Y_t^v$ are equal. If $Im(\gamma_t) \not\subset Y_t^v$, we have a finite decomposition into segments $Im(\gamma_t)=\cup Im(\beta_{i,t})$, where each $\beta_{i,t}$ satisfies the following: $Im(\beta_{i,t})\subset \overline{Y}_t^v $ or $Im(\beta_{i,t})\cap \bar\partial Y_t^v=\{\beta_{i,t}(0), \beta_{i,t}(1)\}$. For each segment $\beta_{i,t}$ satisfying $Im(\beta_{i,t})\cap \overline{Y}_t^v=\beta_{i,t}\cap \bar\partial Y_t^v=\{\beta_{i,t}(0), \beta_{i,t}(1)\}$, we replace that $\beta_{i,t}$ by the arc $(\tilde{\beta}_{i,t},f_j\circ \tilde\beta_{i,t})$ according $\{\beta_{i,t}(0), \beta_{i,t}(1)\} \subset Graph(f_j|_{\tilde Y_t^v})$ for some $j=1,2$. Here, $\tilde{\beta}_{i,t}$ is the arc in $\tilde Y_t^v$ realizing the distance between $\tilde{\beta}_{i,t}(0)$ and $\tilde{\beta}_{i,t}(1)$, where these points are  respectively the projections of $\beta_{i,t}(0)$ and  $\beta_{i,t}(1)$ on the closure of $\tilde Y_t^v$. Since $\|\nabla f_1\|$ and  $\|\nabla f_2\|$ are bounded functions, there is a constant $M>0$ such that the new arc $\tilde\gamma_t$ constructed in this way satisfies $Length(\tilde\gamma_t)\leq M\|x_t-y_t\|$, where the constant $M$ only depends on the LNE constant of $\tilde Y_t^v$ and the maximums of $\|\nabla f_1\|$ and  $\|\nabla f_2\|$. 
\end{proof}
Therefore, $Y$ is LLNE at $0$ w.r.t. $\|\cdot\|_{max,v}$ and by Proposition \ref{LLNE_metric_inv}, $Y$ is LLNE at $0$.
\end{proof}

\begin{remark}\label{closure_L-regular_llne}
Let $Y \subset \mathbb{R}^n$ be an L-regular cell. Then $\overline{Y}$ is LLNE at $p \in \overline{Y}$ whenever its link at $p$, $link_p(\overline{Y})$, is connected.
\end{remark}

Kurdyka in \cite{Kurdyka:1992} proved that any bounded subanalytic set is a finite union of  disjoint $L$-regular cells. This result, Proposition \ref{L-regular_llne} and Remark \ref{closure_L-regular_llne} imply the following result:
\begin{proposition}\label{LLNEdecomposition}
Let $X \subset \mathbb{R}^n$ be a subanalytic set, $0 \in X$. For any bounded subanalytic neighbourhood $\mathcal{U} \subset \mathbb{R}^n$ of $0$, there is a decomposition $X\cap \mathcal{U}=\bigcup_{i=1}^rX_i$, where each $X_i$ and its closure are LLNE at $0$.
\end{proposition}

\begin{definition}
Let $X_1, X_2 \subset \R^n$ be two LLNE sets at $x$ such that $X_1\cap X_2\setminus \{x\}\neq \emptyset$ as a germ at $x$. We say that the pair $(X_1,X_2)$ has \emph{distorted inner-outer link} at $x$ when $X_1\cup X_2$ is not LLNE at $x$.
\end{definition}
\begin{remark}\label{adjacency}
Let $X\subset \R^n$ be a non-empty subset, $x \in \overline{X}$. If $X$ admits a finite LLNE decomposition $\mathcal{X}=\{X_i\}_i$ at $x$ such that each pair $X_k, X_j \in \mathcal{X}$ has no distorted inner-outer link, then $X$ is LLNE at $x$.
\end{remark}

\section{Main Result}\label{sec:main_results}
In this Section, we establish the equivalence of the LNE and LLNE properties and, as a consequence,	 we prove that the LNE property is conical.

%
\begin{theorem}\label{main_thm_connected}
Let $X \subset \mathbb{R}^n$ be a closed subanalytic set, $0 \in X$. Assume that $(X\setminus \{0\},0)$ is a connected germ. Then, $X$ is LNE at $0$ if and only if $X$ is LLNE at $0$.
\end{theorem}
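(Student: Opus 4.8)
The plan is to prove the two implications separately, both resting on a single subanalytic ``radial trivialization'' of the germ, which I would isolate as a lemma. Write $L_t:=X\cap\mathbb{S}^{n-1}(0;t)$ for the $t$-link and $d_{L_t}$ for its inner metric. The claim is: since $(X\setminus\{0\},0)$ is a connected subanalytic germ, there are $\delta>0$, $K\geq 1$ and a subanalytic homeomorphism $\Psi\colon L_\delta\times(0,1]\to X\cap(\overline{B(0;\delta)}\setminus\{0\})$ such that $\|\Psi(z,s)\|=s\delta$ for all $(z,s)$, each radial arc $s\mapsto\Psi(z,s)$ has length at most $K|s_1-s_2|\delta$ between parameters $s_1,s_2$, and each slice $\Psi(\cdot,s)\colon L_\delta\to L_{s\delta}$ is $Ks$-Lipschitz for the outer metric. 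I would obtain $\Psi$ by integrating a subanalytic vector field $v$ on $X\cap(B(0;\delta)\setminus\{0\})$, tangent to $X$, chosen so that $\|v(x)\|\le K\,\langle v(x),x/\|x\|\rangle$ on $X\cap(B(0;\delta)\setminus\{0\})$ (in particular $\|x\|$ strictly increases along the flow at a controlled rate); the existence of such $v$ with $K$ uniform near $0$ comes from a \L ojasiewicz-type inequality for $\|\cdot\|^2|_X$ near $0$ together with the curve selection lemma and the finiteness of a subanalytic stratification of $X$ adapted to $\|\cdot\|$, while connectedness of the germ is what makes $L_t$ connected for small $t$, so that $\Psi(\cdot,s)$ is a homeomorphism onto $L_{s\delta}$. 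The flow of $v$ also provides, for $t\le\delta$, a ``radial projection onto the $t$-link'' $\pi_t\colon X\cap(\overline{B(0;\delta)}\setminus\{0\})\to L_t$, which fixes $L_t$ pointwise and whose restriction to each annulus $X\cap\{t/2\le\|x\|\le 2t\}$ is Lipschitz with constant independent of $t$.

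Granting the lemma, the implication ``LLNE $\Rightarrow$ LNE'' is short. Let $C,\delta$ witness LLNE, shrink $\delta$ so that $\Psi$ exists, and take $x,y\in X\cap B(0;\delta)$ with $\|x\|\le\|y\|$. If $x=0$, then $d_X(0,y)\le K\|y\|=K\|x-y\|$ along the radial arc through $y$. If $x\neq 0$, put $y':=\pi_{\|x\|}(y)\in L_{\|x\|}$. The radial arc from $y$ to $y'$ has length at most $K(\|y\|-\|x\|)\le K\|x-y\|$, so $d_X(y,y')\le K\|x-y\|$ and $\|x-y'\|\le\|x-y\|+\|y-y'\|\le(1+K)\|x-y\|$. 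Since $x,y'\in L_{\|x\|}$, LLNE provides a path in $L_{\|x\|}\subset X$ from $x$ to $y'$ of length at most $C\|x-y'\|\le C(1+K)\|x-y\|$. Hence $d_X(x,y)\le d_X(x,y')+d_X(y',y)\le\big(K+C(1+K)\big)\|x-y\|$, so $X\cap B(0;\delta)$ is $\big(K+C(1+K)\big)$-LNE.

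For ``LNE $\Rightarrow$ LLNE'', fix $C,\delta$ so that $X\cap B(0;\delta)$ is $C$-LNE and $\Psi$ exists, and take $p,q\in L_t$ with $t$ small. If $\|p-q\|\ge t/(2C)$, then since $\Psi(\cdot,t/\delta)$ is $Kt/\delta$-Lipschitz and surjective onto $L_t$ one gets $d_{L_t}(p,q)\le\mathrm{diam}_{\mathrm{inner}}(L_t)\le\tfrac{Kt}{\delta}\,\mathrm{diam}_{\mathrm{inner}}(L_\delta)\le\tfrac{2CK\,\mathrm{diam}_{\mathrm{inner}}(L_\delta)}{\delta}\|p-q\|$. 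If $\|p-q\|<t/(2C)$, choose a path $\gamma$ in $X\cap B(0;\delta)$ from $p$ to $q$ with $\mathrm{length}(\gamma)\le C\|p-q\|$; then $\gamma$ stays inside the annulus $\{t/2\le\|x\|\le 2t\}$, and $\pi_t\circ\gamma$ is a path inside $L_t$ from $p$ to $q$, so $d_{L_t}(p,q)\le\mathrm{Lip}(\pi_t|_{\mathrm{annulus}})\cdot C\|p-q\|$. In both cases the constant does not depend on $t$, so $X$ is LLNE at $0$.

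The real work, and the main obstacle, is the metric content of the lemma: that the radial arcs are $K\delta$-Lipschitz and that $\pi_t$ is Lipschitz on annuli with a constant independent of $t$. One cannot just invoke the topological Local Conical Structure Theorem, since by Birbrair--Fernandes \cite{BirbrairF:2008} the germ need not be bi-Lipschitz to a metric cone, so the vector field $v$ has to be engineered quantitatively; the key estimates are the \L ojasiewicz inequality for $\|\cdot\|^2|_X$ near $0$ and the uniformity supplied by a finite subanalytic stratification. The connectedness hypothesis is genuinely needed and not merely for non-vacuity: the union of two transverse $2$-planes in $\mathbb{R}^4$ is LNE at $0$ yet each of its links is disconnected (hence not LNE), so the equivalence fails without it. I expect the delicate point to be controlling the behaviour near directions where $X$ is tangent to the spheres $\mathbb{S}^{n-1}(0;t)$, i.e.\ where the gradient of $\|\cdot\|$ along $X$ degenerates and where $\pi_t$ threatens to lose its uniform Lipschitz bound; this is exactly where connectedness of the link and the finiteness built into subanalyticity have to be used together.
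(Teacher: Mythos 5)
Your whole argument rests on the ``radial trivialization'' lemma, and that lemma is precisely where the theorem's difficulty lies; as stated it is not proved, and the tools you invoke do not yield it. A pointwise inequality $\|v(x)\|\le K\langle v(x),x/\|x\|\rangle$ (obtainable from \L ojasiewicz-type estimates and transversality of small spheres to a stratification) controls only the \emph{lengths of the integral curves}, i.e.\ your claim that radial arcs are uniformly Lipschitz in the radius. It gives no control whatsoever on the flow \emph{transverse} to the trajectories: to conclude that the slice maps $\Psi(\cdot,s)$ are $Ks$-Lipschitz and that the retractions $\pi_t$ are Lipschitz on annuli with constant independent of $t$, you would need Gronwall-type control on the variation of $v$, i.e.\ a (quasi-)Lipschitz vector field, and this is exactly what stratified vector fields fail to provide near the singular set. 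This failure is not a technicality one can wave away with curve selection and finiteness of a stratification: it is the same phenomenon behind the Birbrair--Fernandes non-conicality result that you yourself cite. Note also that your hard implication (LNE $\Rightarrow$ LLNE) follows in two lines from property (4) of the lemma, so that property is at least as strong as the statement being proved; a proof that hides the entire content in an unproved lemma of comparable strength has a genuine gap. Even your ``easy'' direction is made to depend on the lemma (you need an inward path from $y$ to the sphere of radius $\|x\|$ of length $\lesssim\|y\|-\|x\|$, not merely $\lesssim\|y\|$), although that direction is salvageable by other means.

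For comparison, the paper never constructs any radial flow. The direction LLNE $\Rightarrow$ LNE is done via the arc criterion of Birbrair--Mendes: a failure of LNE is witnessed by two arcs parametrized by the distance to the origin, and the trivial inequality $d_X\le d_{X_t}$ on each sphere immediately contradicts LLNE. The hard direction LNE $\Rightarrow$ LLNE is proved by decomposing the germ into L-regular cells, which are shown separately to be LLNE (Proposition \ref{L-regular_llne}), establishing an arc criterion for LLNE by applying the curve selection lemma to a subanalytic ``distortion'' set, and then comparing inner and outer contact orders ($tord$) of the arcs produced by the pancake metric, reaching a contradiction with the LLNE property of a single cell. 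If you want to rescue your approach, you would have to either prove the transverse Lipschitz estimates (which, if true at all in this generality, would require machinery on the level of Lipschitz stratifications and a genuinely new uniform scaling argument) or reroute the hard direction through a decomposition-plus-arc-criterion scheme as the paper does. Your remark that connectedness is essential (two transverse $2$-planes in $\mathbb{R}^4$) is correct and consistent with the paper's Theorem \ref{main_thm}, which handles disconnected links via the extra condition $d_0(X_t)\geq Kt$.
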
 
\begin{proof}
 

It follows immediately from Arc Criterion Theorem (see \cite[Theorem 2.2]{BirbrairM:2017}) that if $X$ is LLNE at $0$ then $X$ is LNE at $0$. Indeed, assume that $X$ is not LNE at $0$. Then, by Arc Criterion Theorem, there is a pair of subanalytic arcs in $X$ passing through $0$ such that $
\|\gamma_1(t)-\gamma_2(t)\|\ll d_{X}(\gamma_1(t),\gamma_2(t))$ and $\|\gamma_1(t)\|=\|\gamma_2(t)\|=t$ for all small enough $t\geq 0$. 
Since $d_{X}(\gamma_1(t),\gamma_2(t))\leq d_{X_t}(\gamma_1(t),\gamma_2(t))$, for all small enough $t>0$, it follows that $\|\gamma_1(t)-\gamma_2(t)\|\ll d_{X_t}(\gamma_1(t),\gamma_2(t))$. So, $X$ is not LLNE at $0$.


Now, we are going to prove that if $X$ is LNE at $0$ then $X$ is LLNE at $0$. Assume that $X$ is LNE at $0$. 

We need a preliminary comment: Given a closed subanalytic set $X$, $0 \in X$, we say that $X$ is LLNE by arcs at $0$, if there is a constant $K\geq 1$ such that for any pair of subanalytic arcs  $\gamma_1, \gamma_2 \colon [0,\epsilon)\to X$ satisfying $\|\gamma_1(t)\|=\|\gamma_2(t)\|=t$ for all $t\in [0,\epsilon)$, we have $d_{X_t}(\gamma_1(t),\gamma_2(t))\leq K\|\gamma_1(t)-\gamma_2(t))\|$ for all small enough $t>0$. 

Since we are assuming that $(X\setminus \{0\},0)$ is a connected germ, we have the following:

\begin{claim}\label{claim_criterio_arc_llne}
$X$ is LLNE at $0$ if and only if $X$ is LLNE by arcs at $0$.
\end{claim}
\begin{proof}[Proof of Claim \ref{claim_criterio_arc_llne}]
If $X$ is LLNE at $0$ then it is immediate to see that $X$ is LLNE by arcs at $0$. 

On the other hand, let us suppose that $X$ is not LLNE at $0$. Then, there are sequences $\{(x_n,y_n)\}_n \subset X \times X$ and $\{t_n\}_n\subset(0,+\infty)$ satisfying $\|x_n\|=\|y_n\|=t_n$ for all $n$,  $(x_n,y_n)\to (0,0)$ and $\frac{\|x_n-y_n\|}{d_{X_{t_n}}(x_n,y_n)}\to 0 $. 
Using Proposition \ref{LLNEdecomposition}, we can assume that $X=\cup_i X_i$ near to $0$, where each $X_i$ is the closure of an $L$-regular cell (which is also LLNE at $0$).
We can assume that $\{x_n\}_n\subset X_l$ and $\{y_n\}_n\subset X_k$ with $X_l\cap X_k\setminus\{0\}\neq \emptyset$ and $d_{X_t}(x_n,y_n)=d_{{(X_l\cup X_k)}_t}(x_n,y_n)$ for all $n$ (see Remark \ref{adjacency}). We consider the subset $Z=\{(x,y,z,t,\epsilon)\in X^3\times \R_+\times \R_+;\|x\|{=}\|y\|{=}\|z\|=t, 0<\frac{\|x-y\|}{\|x-z\|+\|z-y\|}\leq \epsilon\}\subset \R^{3N}\times \R_+\times \R_+$. We denote by $Z_{kl}$ the subset $Z\cap (X_k\times X_l\times [X_k\cap X_l]\times \R^2_+)$. Let $P\colon \R^{3N}\times \R^2\rightarrow \R^{2N}\times \{0\}$ be the linear projection given by $P(x,y,z,t,\epsilon)=(x,y,0,0,0)$. For any $(x,y,0) \in P(Z_{kl})$, we consider the subset $Distortion(Z_{kl})$ formed by the points $(x,y,z,\|x\|,\epsilon) \in Z_{kl}$ with $(x,y,z,\|x\|)$ satisfying    
\[
\|x-z\|+\|z-y\|\leq \|x-w\|+\|w-y\|, \ \forall w, \ (x,y,w,\|x\|,\epsilon) \in Z_{kl}.
\]

It is clear from the construction of $Distortion(Z_{kl})$ the following inequality: 

\begin{equation}\label{well_positioned_one}
\frac{\|x-y\|}{d_{X_{t}}(x,y)}\leq \frac{\|x-y\|}{\|x-z\|+\|z-y\|}, \quad \forall (x,y,z,t,\epsilon) \in Z_{kl}.
\end{equation}

\begin{claim}\label{claim_dist} 
$Distortion(Z_{kl})$ is a non-empty germ at $0$.
\end{claim}

\begin{proof}[Proof of Claim \ref{claim_dist}] 
Let $\{(x_n,y_n)\}_n \subset X_k\times X_l$ and $\{t_n\}_n\subset (0,\infty)$ be the previous sequences and let $\{z_n\}_n$ and $\{\epsilon_n'\}_n$ be sequences such that $\{(x_n,y_n,z_n,t_n,\epsilon_n')\}_n \subset Z_{kl}$. By Proposition \ref{L-regular_llne}, there exists $C\geq 1$ such that $X_k$ and $X_l$ are $C$-LLNE at $0$. Thus, from the facts that $X_k$ and $X_l$ are $C$-LLNE at $0$, $(x_n,z_n) \in X_k\times X_k$ and $(y_n,z_n) \in X_l\times X_l$ for all $n$, we have the following inequality:
\begin{equation}\label{well_positioned_two}
\frac{1}{C}\frac{\|x_n-y_n\|}{\|x_n-z_n\|+\|z_n-y_n\|}\leq \frac{\|x_n-y_n\|}{d_{X_{t_n}}(x_n,y_n)}.
\end{equation}
For each $n$, let $\epsilon_n=\frac{C\|x_n-y_n\|}{d_{X_{t_n}}(x_n,y_n)}$. Since we are assuming that $\frac{\|x_n-y_n\|}{d_{X_{t_n}}(x_n,y_n)}\to 0 $, we obtain that $\{(x_n,y_n,z_n,t_n,\epsilon_n)\}_n \subset Distortion(Z_{kl})$ and $(x_n,y_n,z_n,t_n,\epsilon_n)\to 0$, which implies that $Distortion(Z_{kl})\neq \emptyset$ as a germ at $0$.
\end{proof}
Now, notice that $Distortion(Z_{kl})$ is a subanalytic germ at $0$. Since we are assuming that $X$ is not LLNE at $0$, by Claim \ref{claim_dist}, $Distortion(Z_{kl})$ is non-empty as a germ at $0$ and, thus, we can take subanalytic arcs $\gamma_1,\gamma_2,\gamma_3\colon[0,\epsilon)\to X$ and $\eta \colon (0,\epsilon) \to \R_+$ such that $(\gamma_1(t),\gamma_2(t),\gamma_3(t),t,\eta(t))\in Distortion(Z_{kl})$ for all $t\in (0,\epsilon)$ and $\eta(t)\to 0$ when $t\to 0$. It follows from Inequality (\ref{well_positioned_one}) that $\frac{\|\gamma_1(t)-\gamma_2(t)\|}{d_{X_t}(\gamma_1(t),\gamma_2(t))}\to 0$, when $t\to 0$. Therefore $X$ is not LLNE by arcs at $0$, which finishes the proof of Claim \ref{claim_criterio_arc_llne}.
\end{proof}

Suppose by contradiction that $X$ is not LLNE at $0$. By Claim \ref{claim_criterio_arc_llne}, there is a pair of arcs $\gamma_1,\gamma_2\colon[0,\epsilon)\to X$ such that $\|\gamma_j(t)\|=t, \ j=1,2$ and $\frac{\|\gamma_1(t)-\gamma_2(t)\|}{d_{X_t}(\gamma_1(t),\gamma_2(t))}\to 0$, when $t\to 0$. From \cite{BirbrairM:2000} (see also \cite{Kurdyka:1997}), we can construct a metric $d_P$ on $X$ (bi-Lipschitz equivalent to $d_X$), considering the LLNE decomposition $X=\cup_j X_j$. By definition of the metric $d_P$, we can choose a finite number of subanalytic arcs $\tilde \beta_1, \ldots \tilde \beta_r, \ \tilde{\beta}_i(0)=0, \ i=1,\ldots,r$ such that
\begin{eqnarray*}
d(t)&:=&d_P(\gamma_1(t),\gamma_2(t))\\
    &\simeq& \|\gamma_1(t)-\tilde{\beta}_1(t)\|+\|\tilde \beta_1(t)-\tilde{\beta}_2(t)\|+\ldots+\|\tilde \beta_r(t)-\gamma_2(t)\|=:\tilde d(t).
\end{eqnarray*}
We can assume that the image of each pair $\tilde \beta_i$ and  $\tilde \beta_{i+1}$ is contained in some $X_j$. We can see that there is a minimal $s\in\{0,1,\ldots,r\}$ such that $tord_X(\gamma_1,\gamma_2)=ord_t\|\tilde \beta_s(t)-\tilde \beta_{s+1}(t)\|$, where $\tilde \beta_0=\gamma_1$ and $\tilde \beta_{r+1}=\gamma_2$. These arcs $\tilde\beta_i$'s defining the function $\tilde d$ are not necessarily parametrized by the distance to $0$, but using the inner and outer Order Comparison Lemmas (see \cite[Lemma 2.5]{BirbrairM:2017} and \cite[Order Comparison Lemma]{BirbrairF:2000}), we obtain that $ord_t\|\tilde \beta_s(t)-\tilde \beta_{s+1}(t)\|=tord(\tilde \beta_s,\tilde \beta_{s+1})$. Therefore $tord_X(\gamma_1,\gamma_2)=tord(\tilde \beta_s, \tilde \beta_{s+1})$.
For each $i\in\{0,1,\ldots,r+1\}$, let $\beta_i$ be the parametrization of $\tilde \beta_i$ by the distance to the origin and let $h\colon [0,\epsilon)\to \R$ be the function given by 

\vspace{0.25cm}
$h(t)=d_{X_t}(\gamma_1(t),\beta_1(t))+d_{X_t}(\beta_1(t),\beta_2(t))+ \ldots +d_{X_t}(\beta_r(t),\gamma_2(t))$.

\vspace{0.25cm}
Since $d_{X_t}(\gamma_1(t),\gamma_2(t)) \leq h(t)$ for all $t$, then $\|\gamma_1(t)-\gamma_2(t)\|\ll h(t)$.
Notice that each term $d_{X_{t}}( \beta_i(t), \beta_{i+1}(t))$ is positive for $t\in (0,\epsilon)$. Then, there exists $i\in\{0,1,\ldots,r\}$ such that $ \|\gamma_1(t)-\gamma_2(t)\|\ll d_{X_t}(\beta_i(t),\beta_{i+1}(t))$.
On the other hand, we are assuming that $X$ is LNE at $0$. Then $tord_X(\gamma_1,\gamma_2)=tord(\tilde \beta_s, \tilde \beta_{s+1})=tord(\beta_s, \beta_{s+1})=tord(\gamma_1,\gamma_2)$. Hence,
$\|\beta_s(t)-\beta_{s+1}(t)\|\ll d_{X_t}(\beta_i(t), \beta_{i+1}(t)) $. By the description of the function $\tilde d$ above, we must have that $tord(\beta_i, \beta_{i+1})\geq tord(\beta_s, \beta_{s+1})$. Hence, this inequality implies that $\| \beta_i(t)- \beta_{i+1}(t)\|\ll d_{X_t}(\beta_i(t), \beta_{i+1}(t)) $, where $ Im(\beta_i), Im( \beta_{i+1}) \subset X_j$ for some $j$, which is a contradiction, since $X_j$ is LLNE at $0$. Therefore $X$ is LLNE at $0$, which finishes the proof.

\end{proof}

\begin{corollary}
Let $X \subset \mathbb{R}^n$ be a closed subanalytic set, $0 \in X$. Assume that $(X\setminus \{0\},0)$ is a connected germ. Then, $X$ is LNE at $0$ if and only if $X$ is LLNE at $0$ w.r.t. any subanalytic norm on $\R^n$.
\end{corollary}
As a direct consequence, we obtain the following version of Theorem \ref{main_thm_connected}, which works even when $(X\setminus \{0\},0)$ is not a connected germ (see definition \ref{dist.connect.comp}).
\begin{corollary}\label{main_thm_non-connected}
Let $X \subset \mathbb{R}^N$ be a closed subanalytic set, $0 \in X$. Let $C_1,...,C_r$ be the connected components of $X\setminus \{0\}$ (as a germ at $0$). Then, the following statements are equivalent:
\begin{itemize}
\item [(i)] $X$ is LNE at $0$;
\item [(ii)] Each $\overline{C}_i$ is LNE at $0$ and there exists $K>0$ such that $d_0(X_t)\geq Kt$ for all small enough $t>0$;
\item [(iii)] Each $\overline{C}_i$ is LLNE at $0$ and there exists $K>0$ such that $d_0(X_t)\geq Kt$ for all small enough $t>0$.
\end{itemize}
\end{corollary}

Let us remark that the previous theorem does not hold true if we remove the subanalytic condition.

\begin{example}\label{ex:non-LNE}
Let $f\colon [0,1]\to \mathbb{R}^2$ be the function given by 
$$
f(x)=\left\{\begin{array}{ll}
xe^{\frac{2\pi i}{x}},&\, x\not=0\\
0,&\, x=0.
\end{array}\right.
$$
Then $X=Im(f):=\{f(x);x\in [0,1]\}$ is not LNE at $0\in \mathbb{R}^2$ (see Figure \ref{espiral}) but $X_t:=X\cap \mathbb{S}_t^2=\{te^{\frac{2\pi i}{t}}\}$ is LNE for all $t>0$ and, moreover, $d_{X_t}=\|\cdot\|$.
\end{example}

\begin{figure}[H]
    \centering \includegraphics[scale=0.41]{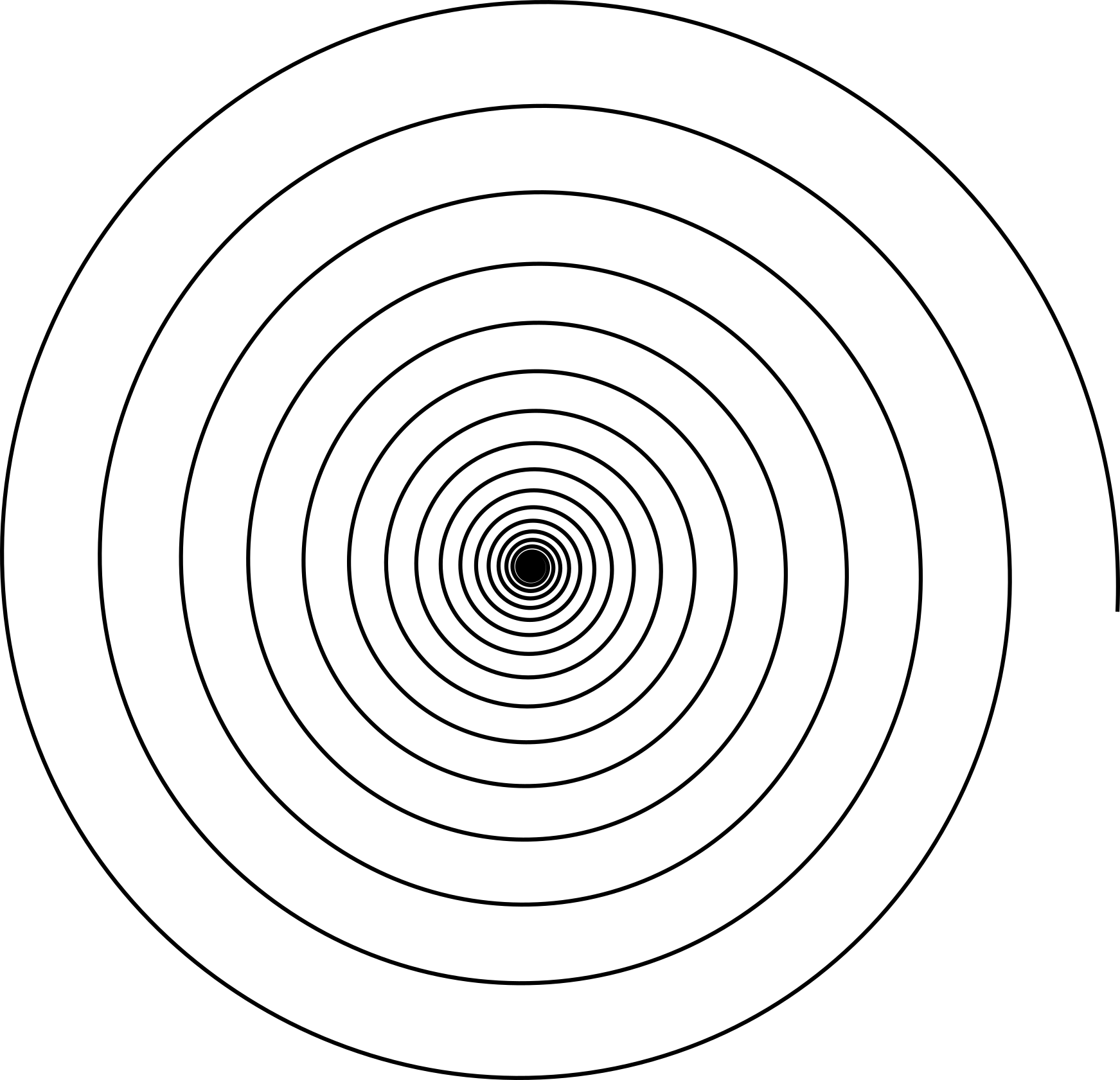}
  \caption{Spiral $X=Im(f)$}\label{espiral}
\end{figure}

\begin{example}\label{ex:non-LLNE}
For each positive integer $j$, we consider $Y_j=\{(x,y,z)\in \R^3;\, x^2+(z-1/j)^2=y^3 \mbox{ and } 0\leq y\leq \big(\frac{1}{4j(j-1)}\big)^\frac{2}{3}\}.$ Let $X=\bigcup\limits_{j=0}^\infty Y_j$, where $Y_0=\{(x,y,z)\in \R^3;\, x=y=0\}$ (see Figure \ref{several_horns}). Then, $X$ is LNE at $0$. However, $X\cap \mathbb{S}_{1/j}^2$ is not LNE for all positive integer $j$, which implies that $X$ is not LLNE at $0$.
\end{example}
\begin{figure}[H]
    \centering \includegraphics[scale=0.8]{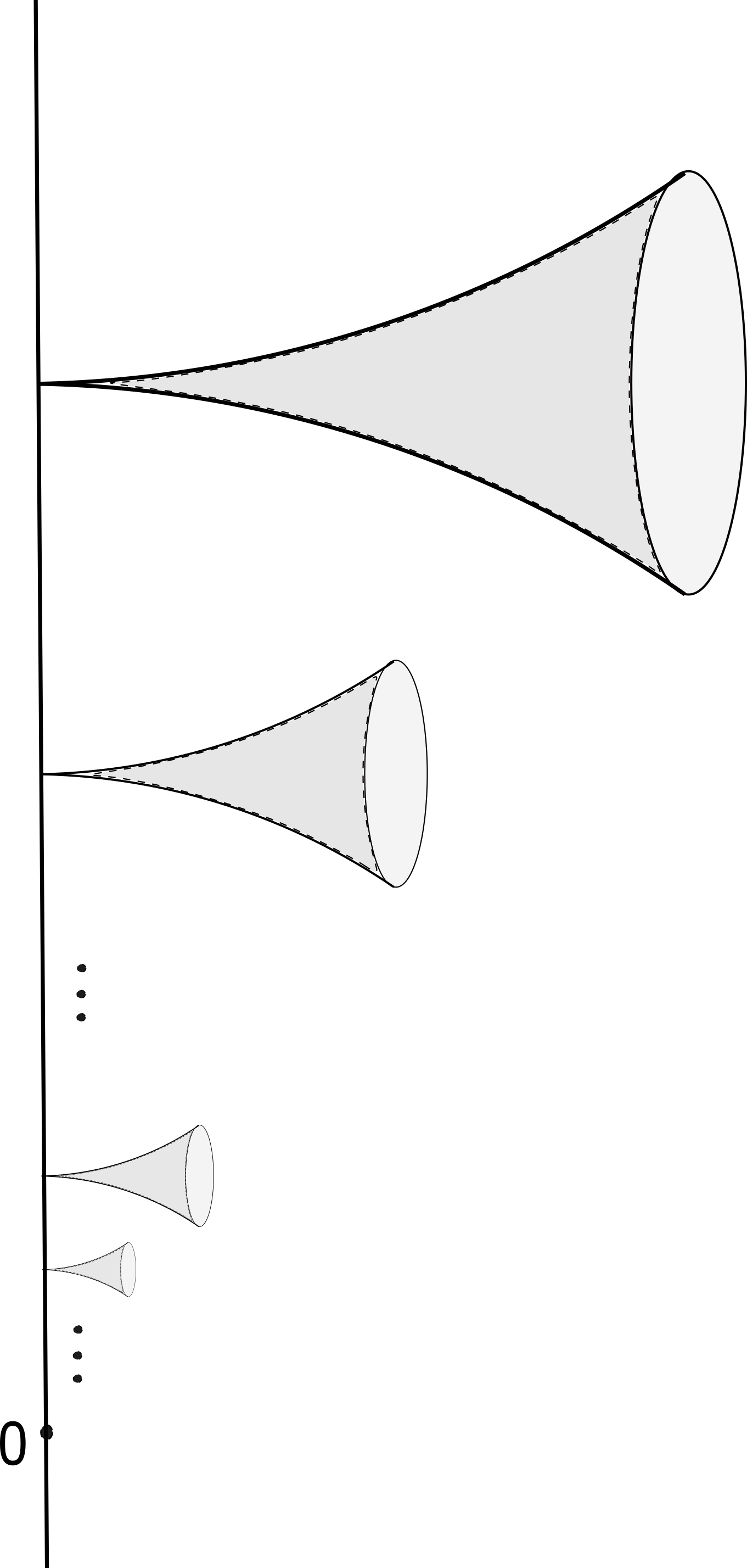}
  \caption{Infinitely many horns and a line}\label{several_horns}
\end{figure}



\begin{thebibliography}{99}

\bibitem{BierstoneM:2000}
{Bierstone, E. and Milman, P. D.}
{\it Semianalytic and subanalytic sets.}.
Inst. Hautes Etudes Sci. Publ. Math. No. 67(1988), 5--42.

\bibitem{Birbrair:2014}
{Birbrair, L.; Fernandes, A.; L\^e D. T. and Sampaio, J. E.}
{\it Lipschitz regular complex algebraic sets are smooth}.
Proceedings of the American Mathematical Society, vol. 144 (2016), no. 3, 983--987.

\bibitem{BirbrairM:2017}
{Birbrair, L. and Mendes, R.} 
{\it Arc criterion of normal embedding}.
Singularities and foliations. geometry, topology and applications, 549–553, Springer Proc. Math. Stat., 222, Springer, Cham, 2018. 

\bibitem{BirbrairMN-B:2018}
Birbrair, L.; Mendes, R. and Nu\~no-Ballesteros, J. J. 
{\it Metrically Un-knotted Corank 1 Singularities of Surfaces in $\mathbb{R}^4$}. 
J. Geom. Anal., vol. 28 (2018), no. 4, 3708–3717.

\bibitem{BirbrairM:2000} 
{Birbrair, L. and Mostowski, T.}
{\it Normal embeddings of semialgebraic sets}.
Michigan Math. J., vol. 47 (2000), 125-132.

\bibitem{BirbrairF:2000}  
{Birbrair, L.; Fernandes, A.} 
{\it Metric theory of semialgebraic curves}, 
Rev. Mat. Complut., vol. 13 (2000), no. 2, 369--382.

\bibitem{BirbrairF:2008}  
{Birbrair, L. and Fernandes, A.} 
{\it Inner metric geometry of complex algebraic surfaces with isolated singularities}. 
Comm. Pure Appl. Math., vol. 61 (2008), no. 11, 1483--1494.

\bibitem{DenkowskiT:2019}
Denkowski, M. and Tibar, M.
{\it Testing Lipschitz non-normally embedded complex spaces}.
Bull. Math. Soc. Sci. Math. Roumanie, vol. 62 (2019), no. 2, 93–100.

\bibitem{FernandesS:2019}  
{Fernandes, A. and Sampaio, J. E.} 
{\it Tangent cones of Lipschitz normally embedded sets are Lipschitz normally embedded. Appendix by Anne Pichon and Walter D. Neumann}.
International Mathematics Research Notices, vol. 2019 (2019), no. 15, 4880--4897.

\bibitem{FernandesS:2020}  
{Fernandes, A. and Sampaio, J. E.}
{\it On Lipschitz rigidity of complex analytic sets}. 
The Journal of Geometric Analysis, vol. 30 (2020), 706--718.

\bibitem{FernandesS:2021}  
{Fernandes, A. and Sampaio, J. E.} 
{\it On characterization of smoothness of complex analytic sets}.
Accepted for publication in Indiana University Mathematics Journal (2022). arXiv:2110.08199 [math.AG].


\bibitem{Gabrielov:1968}
{Gabrielov, A. M.}
{\it Projection of semi-analytic sets}.
Functional Anal. Appl., vol. 2 (1968), 282--291.

\bibitem{Lojasiewicz:1964}
{Lojasiewicz, S.}
{\it Ensembles semi-analytiques}.
Inst. Hautes \'Etudes Sci., Bures-sur-Yvette, 1964.

\bibitem{KernerPR:2018}
{Kerner, D.; Pedersen, H. M. and Ruas, M. A. S.}
{\it Lipschitz normal embeddings in the space of matrices}. 
Math. Z., vol. 290 (2018), no. 1-2, 485--507.

\bibitem{Kurdyka:1997} 
{Kurdyka, K. and Orro, P.}
{\it Distance g{\'e}od{\'e}sique sur un sous-analytique}.
Rev. Mat. Complut., vol. 10 (1997), no. Suplementario, 173--182.
 
\bibitem{Kurdyka:1992}
{Kurdyka, K.}
{\it On a subanalytic stratification satisfying a Whitney property with exponent 1}.
Lecture Notes in Math., 1524, 316--323. Berlin: Springer-Verlag, 1992.

\bibitem{MisevP:2019}
Misev, F. and Pichon, A.
{\it Lipschitz Normal Embedding Among Superisolated Singularities}.
International Mathematics Research Notices, vol. 2021 (2021), no. 17, 13546-–13569.


\bibitem{NeumannPP:2019}
{Neumann, W. D.; Pedersen, H. M. and Pichon, A.}
{\it A characterization of Lipschitz normally embedded surface singularities}.
J. London Math. Soc., vol. 101 (2020), no. 2, 612--640.

\bibitem{NeumannPP:2019b}
{Neumann, W. D.; Pedersen, H. M. and Pichon, A.}
{\it Minimal surface singularities are Lipschitz normally embedded}.
J. London Math. Soc., vol. 101 (2020), no. 2, 641--658.





\bibitem{Sampaio:2020}
{Sampaio, J. E.}
{\it Some homeomorphisms that preserve tangent cones and multiplicity}. 
Contemporary Mathematics, vol. 742 (2020), 189--200.

\bibitem{Sampaio:2022}
{Sampaio, J. E.} 
{\it On Lipschitz Geometry at infinity of complex analytic sets.}
Calculus of Variations and Partial Differential Equations, vol. 62 (2023), no. 2, article number 69.

\bibitem{Valette:2007} 
{Valette, G.}
{\it The link of the germ of a semi-algebraic metric space}. 
Proc. Amer. Math. Soc., vol. 135 (2007), no. 10, 3083-3090.

\end{thebibliography}
\end{document}